\numberwithin{equation}{section}
\numberwithin{figure}{section}
\theoremstyle{plain}
\newtheorem{thm}{Theorem}
  \theoremstyle{plain}
  \newtheorem{prop}[thm]{Proposition}
\newtheorem{Theorem}{Theorem}
\theoremstyle{definition}
\theoremstyle{Example}
 \newtheorem{example} [Theorem] {Example}
\theoremstyle{definition}
  \newtheorem{defn}[thm]{Definition}
   \theoremstyle{plain}
  \theoremstyle{corollary}
  \newtheorem{cor}[thm]{Corollary}
\theoremstyle{remark}
  \newtheorem{remark}[thm]{Remark}
\numberwithin{Theorem}{section} \numberwithin{equation}{section}
\def\bC{{\mathbb C}}
\def\bP{{\mathbb P}}
\def\bN{{\mathbb N}}
\def\bQ{{\mathbb Q}}
\begin{document}

\newcommand{\Gal}{{\rm Gal}} 
\newcommand{\Pic}{{\rm Pic}} 
\def\bP{{\mathbb P}}

\title[Canonical surfaces with big cotangent bundle]{Canonical surfaces with big cotangent bundle}

\addtolength{\textwidth}{0mm}
\addtolength{\hoffset}{-0mm} 
\addtolength{\textheight}{0mm}
\addtolength{\voffset}{-0mm}

\author{Xavier Roulleau, Erwan Rousseau}
\thanks{The second author is partially supported by the ANR project \lq\lq POSITIVE\rq\rq{}, ANR-2010-BLAN-0119-01.}
\subjclass[2010]{Primary: 14J60, 14J70; Secondary: 14J25, 32Q45.}
\date{}

\begin{abstract}
Surfaces of general type with positive second Segre number
are known to have big cotangent bundle. We give a new criterion ensuring
that a surface of general type with canonical singularities has a minimal resolution
with big cotangent bundle. This provides many examples of surfaces with negative second Segre number and big cotangent bundle.
\end{abstract}

\maketitle

\section{Introduction}
Projective varieties with positive cotangent bundle have attracted a lot of attention because of
the strong geometric properties they have. In particular, they are Kobayashi hyperbolic and algebraically hyperbolic (see \cite{Debarre} for an introduction).

Surfaces of general type with ample cotangent bundle are known to have positive second Segre class $s_2:=c_1^2-c_2 >0$ \cite{FL}. In fact, Bogomolov \cite{Bogo77} proved that if $X$ is a surface of general type with positive second Segre class, then the family of curves on $X$ of fixed geometric genus is bounded. The numerical positivity ensures that these surfaces have many symmetric tensors, in other words their cotangent bundle is big.

Besides surfaces with positive second Segre number, it seems that there are few examples of surfaces with big cotangent bundle. Smooth surfaces in $\bP^3$ are well known to have no symmetric tensors \cite{Sakai}, but Bogomolov and De Oliveira \cite{BogO} made the interesting observation that minimal resolutions of singular surfaces may provide such examples. In \cite{BogO} nodal surfaces are considered and a numerical condition on the number of nodes is given to ensure that the resolution will have big cotangent bundle. Unfortunately, this statement turns out not to be completely correct (see section \ref{nodal} below for details).

In this article, we would like to give a new general criterion ensuring that a surface of general type has big cotangent bundle. Let us describe our result.

Recall that if we take a minimal model $Y$ of a surface of general type (i.e. smooth with $K_Y$ nef and big) then the curves $E$ with $K_Y.E=0$ form bunches of $(-2)$-curves, and can be contracted to \emph{canonical} singularities (also known as $ADE$ or Du Val singularities in the case of surfaces, see for example \cite{Reid} or \cite{KoMo}).

Let $X$ be a \emph{canonical} surface i.e a projective surface with positive canonical divisor $K_X$ and at worst canonical singularities. In dimension $2$, canonical singularities are known to be quotients of $\bC^2$ by finite subgroups of $SL(2, \bC).$ Therefore we can attach two objects to the surface $X$.
On the one hand, we consider $Y\to X$ its minimal resolution. On the other hand, we let $\mathcal{X} \to X$ to be the orbifold (or stack) attached to $X$.

We denote by $s_{2}(Y)=c_{1}^{2}(Y)-c_{2}(Y)$
and $s_{2}(\mathcal{X})=c_{1}^{2}(\mathcal{X})-c_{2}(\mathcal{X})$
the second Segre numbers of $Y$ and $\mathcal{X}$ respectively.

\begin{thm}\label{main}
Let $X$ be a canonical surface, $Y\to X$ its minimal resolution and $\mathcal{X} \to X$ the orbifold associated to $X$. If $$s_{2}(Y)+s_{2}(\mathcal{X})>0,$$
then the cotangent bundle of $Y$ is big. In particular, $Y$ has only finitely many rational or elliptic curves.
\end{thm}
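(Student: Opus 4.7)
The plan is the classical Bogomolov route: combine Riemann--Roch and a vanishing theorem to force $h^0(Y, S^m\Omega_Y)$ to grow like $m^3$. On a smooth surface one has $\chi(Y, S^m\Omega_Y) = \tfrac{m^3}{6}s_2(Y) + O(m^2)$, and for a minimal surface of general type Bogomolov's vanishing yields $h^2(Y, S^m\Omega_Y) = O(m^2)$. Thus in the classical regime $s_2(Y)>0$ one immediately concludes bigness. The task here is to recover the same conclusion when only the \emph{sum} $s_2(Y)+s_2(\mathcal{X})$ is positive, by producing additional symmetric differentials coming from the orbifold structure at the singularities of $X$.

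\textbf{The orbifold contribution.} Near each canonical singularity $p\in X$ with local model $\CC^2/G_p$, sections of $S^m\Omega_{\mathcal{X}}$ are the $G_p$-invariant symmetric $m$-tensors on $\CC^2$. I would introduce a coherent sheaf $\widetilde{\mathcal{S}}^m$ on $Y$ built from the orbifold cotangent sheaf, for instance as the reflexive pullback to $Y$ of $\pi_*S^m\Omega_{\mathcal{X}}$, together with its natural map to $S^m\Omega_Y$ on the smooth locus $Y\setminus E$. An orbifold Riemann--Roch computation (equivalently, a local representation-theoretic computation at each ADE singularity using the $G_p$-module structure on $S^m\Omega_{\CC^2}$) gives
\[
\chi(Y,\widetilde{\mathcal{S}}^m) = \tfrac{m^3}{6}\,s_2(\mathcal{X}) + O(m^2).
\]
Combining $S^m\Omega_Y$ and $\widetilde{\mathcal{S}}^m$ through a short exact sequence, or as a direct sum modulo their common image on $Y\setminus E$, then produces a coherent sheaf $\mathcal{F}_m$ on $Y$ with $\chi(Y,\mathcal{F}_m) = \tfrac{m^3}{6}\bigl(s_2(Y)+s_2(\mathcal{X})\bigr) + O(m^2)$, and equipped with an injection $\mathcal{F}_m \hookrightarrow S^m\Omega_Y(D)$ for some effective divisor $D$ supported on the exceptional locus.

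\textbf{Conclusion and main obstacle.} Applying Bogomolov's vanishing theorem (with the nef and big divisor $K_Y$) to bound $h^2(Y,\mathcal{F}_m) = O(m^2)$, one deduces $h^0(Y,\mathcal{F}_m) \geq \tfrac{m^3}{6}(s_2(Y)+s_2(\mathcal{X})) + O(m^2)$; via the injection into $S^m\Omega_Y(D)$ this forces $h^0(Y,S^m\Omega_Y)$ to grow like $m^3$, proving $\Omega_Y$ big. The finiteness of rational and elliptic curves is then Bogomolov's classical consequence of bigness: on a surface with big cotangent bundle, curves of bounded geometric genus form a bounded family, and for genus $\leq 1$ this family must in fact be finite. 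The technical heart of the proof is the precise matching between $S^m\Omega_Y$ and $\widetilde{\mathcal{S}}^m$ at each ADE singularity --- showing that the two sources of sections contribute additively (giving $s_2(Y)+s_2(\mathcal{X})$ rather than $\max$), and simultaneously controlling $h^2$ of the enlarged sheaf; this local analysis at the exceptional divisors is where the bulk of the work should lie.
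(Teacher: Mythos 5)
Your high-level plan — orbifold Riemann--Roch on $\mathcal{X}$, Bogomolov vanishing, an extension argument from $Y\setminus E$ to $Y$ — identifies the right ingredients, and your intermediate object $\widetilde{\mathcal{S}}^m$ is in effect $S^m\Omega_Y(\log E)$, whose sections are exactly those of $S^m\Omega_{\mathcal{X}}$ by Miyaoka's extension theorem. But the argument as proposed has a genuine gap precisely at the step you call its ``technical heart,'' and the shape of that gap is visible already from the leading coefficient. You want a sheaf $\mathcal{F}_m$ with $\chi(\mathcal{F}_m)=\tfrac{m^3}{6}(s_2(Y)+s_2(\mathcal{X}))+O(m^2)$ that injects into $S^m\Omega_Y(D)$ with $D$ supported on $E$, and you want this to force $h^0(Y,S^m\Omega_Y)\gtrsim m^3$. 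Two problems. First, no such ``combining'' construction produces the sum of the two Euler characteristics while remaining a subsheaf of a twist of $S^m\Omega_Y$. If you take the sheaf-theoretic sum $S^m\Omega_Y+\widetilde{\mathcal{S}}^m$ inside $j_*(S^m\Omega_{Y\setminus E})$ (for $j\colon Y\setminus E\hookrightarrow Y$) you get $\chi(\text{sum})=\chi(S^m\Omega_Y)+\chi(\widetilde{\mathcal{S}}^m)-\chi(\text{intersection})$, and both $S^m\Omega_Y$, $\widetilde{\mathcal{S}}^m$, and their intersection have rank $m+1$ and $\chi$ of order $m^3$, so the subtracted term is not $O(m^2)$; if instead you take the external direct sum $S^m\Omega_Y\oplus\widetilde{\mathcal{S}}^m$ you get the additive $\chi$ but lose the injection into any $S^m\Omega_Y(D)$. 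The coefficient $\tfrac16(s_2(Y)+s_2(\mathcal{X}))$ also cannot be right: in the smooth case ($E=\emptyset$, $\mathcal{X}=Y$) it would give $h^0(Y,S^m\Omega_Y)\geq\tfrac{s_2(Y)}{3}m^3+O(m^2)$, twice what actually holds (and is sharp) for surfaces with ample cotangent bundle. The theorem in the paper claims only the coefficient $\tfrac1{12}(s_2(Y)+s_2(\mathcal{X}))$, which degenerates correctly to $\tfrac{s_2(Y)}{6}$ when $Y$ is smooth.

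Second, even granting an injection $\mathcal{F}_m\hookrightarrow S^m\Omega_Y(D)$, that does not give sections of $S^m\Omega_Y$: you would only get $h^0(S^m\Omega_Y(D))\gtrsim m^3$, and the difference between $h^0(S^m\Omega_Y(D))$ (or $h^0(S^m\Omega_Y(\log E))$) and $h^0(S^m\Omega_Y)$ is controlled by $h^0(Q_m)$ for a sheaf $Q_m$ supported on $E$, which is a priori of order $m^3$. Bounding that term is exactly the crux that you have not addressed. The paper handles it by a dichotomy on $\varlimsup h^0(Q_m)/m^3$, using the exact sequence $0\to S^m\Omega_Y\to S^m\Omega_Y(\log E)\to Q_m\to 0$: if the limit is at most $\tfrac{s_2(\mathcal{X})-s_2(Y)}{12}$, then subtracting $h^0(Q_m)$ from the orbifold count $h^0(S^m\Omega_Y(\log E))\geq\tfrac{s_2(\mathcal{X})}{6}m^3+O(m^2)$ already gives the claimed bound; if the limit is larger, one twists the sequence by $K_Y^{\otimes(1-m)}$ (harmless near $E$ since $K_Y$ is trivial there), applies Serre duality and orbifold Bogomolov vanishing to kill $h^0(S^m\Omega_Y(\log E)\otimes K_Y^{\otimes(1-m)})$, concludes $h^0(Q_m)\leq h^1(Y,S^m\Omega_Y)$, and feeds this lower bound on $h^1$ into Riemann--Roch on $Y$ (with $h^2=0$ by Bogomolov) to again reach $\tfrac{s_2(Y)+s_2(\mathcal{X})}{12}$. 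The threshold is chosen so that both cases give the same constant. That Serre duality pivot — turning an a priori large obstruction $h^0(Q_m)$ into a useful lower bound on $h^1(S^m\Omega_Y)$ — is the idea missing from your proposal, and you cannot get the result by making the two contributions ``add'' in a single sheaf.
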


Thanks to the work of McQuillan \cite{McQ0}, this result also provides surfaces of general type satisfying the Green-Griffiths-Lang conjecture, since entire curves in such surfaces will be contained in a proper algebraic subvariety.

As applications, we obtain many examples of surfaces with big cotangent bundle and negative second Segre number. Among them, we have the following results.

\begin{thm}\label{A_kthm}
Let $X \subset \bP^3$ be a hypersurface of degree $d$ with $\ell$ singularities $A_k$ and let $Y \to X$ be its minimal resolution. If
$$ \ell > \frac{4(k+1)}{k(k+2)} (2d^2-5d),$$
then $Y$ has a big cotangent bundle.
\end{thm}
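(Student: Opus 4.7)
The plan is to apply Theorem~\ref{main} directly: I will compute $s_2(Y)$ and $s_2(\mathcal{X})$ explicitly in terms of $d$, $k$, and $\ell$, and observe that the stated bound on $\ell$ is precisely the condition $s_2(Y)+s_2(\mathcal{X})>0$.

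First I would record the $c_1^2$'s, which are easy. Because $X$ is a hypersurface with at worst ADE singularities, $X$ is Gorenstein with $K_X=\mathcal{O}_X(d-4)$, so $K_X^2=d(d-4)^2$. Since $A_k$ singularities are canonical, the minimal resolution $\pi\colon Y\to X$ is crepant, $K_Y=\pi^*K_X$, and hence $c_1^2(Y)=d(d-4)^2$; likewise $c_1^2(\mathcal{X})=K_{\mathcal{X}}^2=K_X^2=d(d-4)^2$.

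Next I would compute the two second Chern numbers via Euler characteristics. For the topological $c_2(Y)=\chi(Y)$, I use that the Milnor number of $A_k$ is $k$, so a smoothing argument gives $\chi(X)=\chi(X_{\mathrm{sm}})-\ell k=d(d^2-4d+6)-\ell k$. Resolving each $A_k$ point replaces a point ($\chi=1$) by a chain of $k$ smooth rational curves ($\chi=k+1$), contributing $\ell k$ back, so
\[
c_2(Y)=\chi(X)+\ell k=d(d^2-4d+6),\qquad s_2(Y)=-2(2d^2-5d).
\]
For the orbifold number I use that the local group at an $A_k$ singularity is the cyclic subgroup $\mathbb{Z}_{k+1}\subset SL(2,\bC)$, so in computing $\chi_{\mathrm{orb}}(\mathcal{X})$ each singular point of $X$ contributes $1/(k+1)$ instead of $1$:
\[
c_2(\mathcal{X})=\chi(X)-\ell+\tfrac{\ell}{k+1}=d(d^2-4d+6)-\ell\cdot\tfrac{k(k+2)}{k+1},
\]
which yields $s_2(\mathcal{X})=-2(2d^2-5d)+\ell\cdot\frac{k(k+2)}{k+1}$.

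Adding the two gives
\[
s_2(Y)+s_2(\mathcal{X})=-4(2d^2-5d)+\ell\cdot\frac{k(k+2)}{k+1},
\]
and this is positive exactly when $\ell>\frac{4(k+1)}{k(k+2)}(2d^2-5d)$. Theorem~\ref{main} then forces $\Omega^1_Y$ to be big, finishing the proof. The only genuinely delicate point is the orbifold $c_2$ computation: one must be confident that $c_2(\mathcal{X})$ coincides with the orbifold Euler characteristic with the $1/|G_i|$ weight at each stacky point, and that $|G|=k+1$ for $A_k$. Everything else (hypersurface Chern numbers, Milnor numbers of ADE singularities, Euler characteristics of exceptional chains) is standard bookkeeping.
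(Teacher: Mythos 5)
Your proposal is correct and follows essentially the same route as the paper: compute $s_2(Y)$ and $s_2(\mathcal{X})$ and feed them into Theorem~\ref{main}. The only cosmetic difference is that you rederive $c_2(Y)=d(d^2-4d+6)$ and $c_2(\mathcal{X})=c_2(Y)-(k+1)\ell+\frac{\ell}{k+1}$ from first principles (Milnor numbers and the orbifold weighting $1/|G|$ at stacky points), whereas the paper simply invokes Proposition~\ref{comp} together with Brieskorn's simultaneous resolution; the resulting values $s_2(Y)=10d-4d^2$ and $s_2(\mathcal{X})=10d-4d^2+\bigl(k+1-\tfrac{1}{k+1}\bigr)\ell$ agree, and the stated bound on $\ell$ is exactly the condition $s_2(Y)+s_2(\mathcal{X})>0$. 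The "delicate point'' you flag at the end is precisely the content of Proposition~\ref{comp}, so you could shorten the argument by citing it directly.
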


As a consequence, we obtain the existence of surfaces with big cotangent sheaf for all degrees $d \geq 13$. In the following  application of Theorem \ref{A_kthm}, the singularities of the surface considered are $A_{d-1}$:

\begin{thm}\label{ram}
Let $X \subset \bP^3$ be the ramified cover of $\bP^2$ of degree $d=\sum d_j$, branched along the normal crossing divisor $\ensuremath{D=\cup_{j=1}^{j=k}D_{j}\subset\bP^{2}}$, where $D_{j}$
is a curve of degree $d_{j}$.
Suppose that $d_i \geq c$ for $i=1,\dots,k$ and
$$k(k-1) > \frac{8d^2(2d-5)}{c^2(d^2-1)},$$
then the minimal resolution $Y \to X$ has big cotangent bundle.
\end{thm}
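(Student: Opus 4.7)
The plan is to realize the cyclic cover explicitly as a hypersurface in $\bP^{3}$, identify its singularities as a single $A_{k}$-type, count them via B\'ezout, and then feed the estimate into Theorem \ref{A_kthm}. Concretely, if $f_{j}\in\bC[x_{0},x_{1},x_{2}]$ is a defining equation of $D_{j}$, I would model $X$ as the degree-$d$ hypersurface
\[
X=\{z^{d}=f_{1}\cdots f_{k}\}\subset\bP^{3},
\]
where $z$ is the fourth coordinate. Away from the preimage of $D$ the projection to $\bP^{2}$ is \'etale, so all singularities of $X$ lie over points of the singular locus of $D$. Since $D$ has only normal crossings, each singular point of $D$ is a node lying on exactly two components $D_{i}\cap D_{j}$, and locally we may write the equation in the form $z^{d}=uv$ where $u,v$ are local coordinates on $\bP^{2}$. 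This is the standard equation of an $A_{d-1}$ singularity, so every singularity of $X$ is of type $A_{d-1}$.

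Next I would count $\ell$, the number of singular points. By B\'ezout's theorem each pair $D_{i},D_{j}$ meets in exactly $d_{i}d_{j}$ points (the normal crossing assumption guarantees these are distinct and transverse), and the components are pairwise distinct, so
\[
\ell=\sum_{1\le i<j\le k}d_{i}d_{j}.
\]
Under the hypothesis $d_{i}\ge c$ for every $i$, this immediately yields the lower bound
\[
\ell\;\ge\;c^{2}\binom{k}{2}\;=\;\frac{c^{2}k(k-1)}{2}.
\]

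Now I would apply Theorem \ref{A_kthm} with the singularity parameter equal to $d-1$ (so $k+1$ there becomes $d$ and $k+2$ becomes $d+1$). The sufficient condition of Theorem \ref{A_kthm} reads
\[
\ell\;>\;\frac{4d}{(d-1)(d+1)}\bigl(2d^{2}-5d\bigr)\;=\;\frac{4d^{2}(2d-5)}{d^{2}-1}.
\]
Combining with the previous lower bound, it is enough that
\[
\frac{c^{2}k(k-1)}{2}\;>\;\frac{4d^{2}(2d-5)}{d^{2}-1},
\]
which is exactly the assumption $k(k-1)>\dfrac{8d^{2}(2d-5)}{c^{2}(d^{2}-1)}$. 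Hence Theorem \ref{A_kthm} applies and the minimal resolution $Y\to X$ has big cotangent bundle.

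The only non-mechanical step is the local identification of the singularity type: one must check that the hypersurface $z^{d}=uv$ really is an $A_{d-1}$ du Val singularity, which I expect to be the main (but standard) point to verify carefully; the rest is bookkeeping with B\'ezout and the elementary inequality $\sum_{i<j}d_{i}d_{j}\ge c^{2}\binom{k}{2}$. One should also note in passing that $X$ has positive canonical divisor as soon as $d$ is large enough (which is forced by the numerical hypothesis), so that $X$ is indeed a canonical surface in the sense required by Theorem \ref{main}.
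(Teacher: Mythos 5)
Your proof is correct and matches the intended route: the paper's own introduction already flags Theorem \ref{ram} as ``an application of Theorem \ref{A_kthm}'' with singularities $A_{d-1}$, and your reduction (identify $X$ as a degree-$d$ hypersurface $z^d=f_1\cdots f_k$ in $\bP^3$, recognize $z^d=uv$ as an $A_{d-1}$ point over each node of $D$, count $\ell=\sum_{i<j}d_id_j\ge c^2\binom{k}{2}$ by B\'ezout, and plug $k\mapsto d-1$ into the bound of Theorem \ref{A_kthm}) reproduces exactly the inequality in the statement. The only cosmetic difference is that the paper's written-out proof re-derives the Chern numbers $c_1^2(Y)=d(d-4)^2$, $c_2(Y)=d(d^2-4d+6)$, and $c_2(\mathcal{X})=c_2(Y)-(d-\tfrac1d)\sum_{i<j}d_id_j$ and applies Theorem \ref{main} directly rather than invoking Theorem \ref{A_kthm} as a black box, but the computations are identical after substitution. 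One small point worth adding explicitly (the paper handles it in the surrounding discussion by ``disregarding'' the non-general-type cases) is that one must also assume $d\ge 5$ so that $X$ is of general type; the numerical hypothesis alone does not rule out, say, $d=2$, where the right-hand side is negative.
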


This result provides examples of hypersurfaces with big cotangent bundle for all degrees $d \geq 15$. 

\

The paper is structured as follows. In section $2$ we present the orbifold setting and the results we will use. In section $3$ we prove our main Theorem \ref{main}. In section $4$ we give applications proving Theorem \ref{A_kthm} and Theorem \ref{ram}. Section $5$ is devoted to the geography of surfaces of general type with big cotangent bundle.

\subsubsection*{Acknowledgments}
The second author thanks Fedor Bogomolov, Bruno De Oliveira and Jordan Thomas for several interesting and fruitful discussions and remarks. 
He also thanks the CNRS for the opportunity to spend a semester in Montreal at the UMI CNRS-CRM and the hospitality of UQAM-Cirget where part of this work was done.

\section{Orbifolds basics}

For the convenience of the reader, we recall in this section the basic facts on orbifolds (refering to \cite{RR} for details) that will be used in the proof of Theorem \ref{main}.

\subsection{Orbifolds and canonical singularities}
We define orbifolds as a particular type of log pairs. The data $(X,\Delta)$ is a log pair if $X$ is a normal algebraic variety (or a normal complex space) and $\Delta=\sum_id_iD_i$ is an effective $\bQ$-divisor where the $D_i$ are distinct, irreducible divisors and $d_i \in \bQ$. 

For orbifolds, we need to consider only pairs $(X,\Delta)$ such that $\Delta$ has the form
$\Delta=\sum_i \left(1-\frac{1}{m_i}\right) D_i,$
where the $D_i$ are prime divisors and $m_i \in \bN^*$. 

\begin{defn}
An {\em orbifold chart} on $X$ compatible with $\Delta$ is a Galois covering $\varphi : U \to \phi(U)\subset X$ such that
\begin{enumerate}
\item $U$ is a domain in $\bC^n$ and $\varphi(U)$ is open in $X$,
\item the branch locus of $\varphi$ is $\lceil \Delta \rceil \cap \varphi(U)$,
\item for any $x \in U'':=U\setminus \varphi^{-1}(X_{sing}\cup \Delta_{sing})$ such that $\varphi(x) \in D_i$, the ramification order of $\varphi$ at $x$ verifies $ord_{\varphi}(x)=m_i.$
\end{enumerate}
\end{defn}

\begin{defn}
An orbifold $\mathcal{X}$ is a log pair $(X,\Delta)$ such that $X$ is covered by orbifold charts compatible with $\Delta.$
\end{defn}

\begin{defn}
Let $(X,\Delta)$, $\Delta=\sum_i\left(1-\frac{1}{m_i}\right)C_i$, be a pair where $X$ is a normal surface and $K_X+\Delta$ is $\mathbb{Q}$-Cartier. Let $\pi: \tilde{X} \to X$ be a resolution of the singularities of $(X,\Delta)$, so that the exceptional divisors, $E_i$ and the components of $\tilde{\Delta}$, the strict transform of $\Delta$, have normal crossings and
$K_{\tilde{X}}+\tilde{\Delta}=\pi^*(K_X+\Delta)+\sum_ia_iE_i.$

We say that $(X,\Delta)$ has \emph{canonical} singularities if $a_i \geq 0$ for every exceptional curve $E_i$.

If $\Delta=0$, then canonical singularities of $X$ are the same as Du Val singularities (or $ADE$ singularities), which are quotient singularities (see \cite{KoMo} for details). As a consequence $X$ has an orbifold structure $\mathcal{X}$. Moreover, their minimal resolution $Y\to X$ is such that $K_Y=f^*K_X.$
\end{defn}

\subsection{Chern classes}
Let $\pi: \mathcal{X} \to (X, \Delta)$ be a two dimensional orbifold for which  $\Delta=0$ and the singularities are $ADE$.
 Let $a_{n}$ (resp $d_{n}$, $e_{n}$) be the number of $A_{n}$ (resp. $D_{n},\, E_{n}$)
singularities on $X$ and $Y \to X$ its minimal resolution.

\begin{prop}[\cite{RR}]\label{comp}
\label{ChernNumberOrbi}
The Chern numbers of $\mathcal{X}$ are $c_{1}^{2}(\mathcal{X})=c_{1}^{2}(X)=c_{1}^{2}(Y)$ and:
\begin{equation}
\label{Chernnumb}
c_{2}(\mathcal{X})=c_{2}(Y)-\sum(n+1)(a_{n}+d_{n}+e_{n})+\sum\frac{a_{n}}{n+1}+\frac{d_{n}}{4(n-2)}+\frac{e_{6}}{24}+\frac{e_{7}}{48}+\frac{e_{8}}{120}.
\end{equation}
\end{prop}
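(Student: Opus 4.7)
The plan is to handle $c_1^2$ and $c_2$ separately. For $c_1^2$, the key point is that $ADE$ singularities are canonical, so by the last sentence of the Definition the minimal resolution satisfies $K_Y=\pi^{*}K_X$ and hence $c_1^2(Y)=K_Y^2=(\pi^{*}K_X)^2=K_X^2=c_1^2(\mathcal{X})$, where the last equality is the definition of the orbifold first Chern number via a canonical divisor that is a $\mathbb{Q}$--line bundle descending from the local charts. This part is essentially immediate once the general definitions are in place.

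For $c_2$, I would use the identification of $c_2$ of a smooth surface with its topological Euler characteristic together with the analogous identification of $c_2(\mathcal{X})$ with the orbifold Euler characteristic: for each local chart $\varphi\colon U\to U/G$, a point with stabilizer of order $|G|$ contributes $1/|G|$ to $c_2(\mathcal{X})$. Concretely, since every $ADE$ singularity is $\mathbb{C}^2/G$ with $G\subset SL(2,\mathbb{C})$ a finite subgroup acting freely away from $0$, the decomposition $X=X^{\mathrm{sm}}\sqcup\{p_1,\dots,p_N\}$ yields
\begin{equation*}
c_2(\mathcal{X})=\chi(X^{\mathrm{sm}})+\sum_{i=1}^{N}\frac{1}{|G_i|},
\qquad
c_2(Y)=\chi(X^{\mathrm{sm}})+\sum_{i=1}^{N}\chi(E_i),
\end{equation*}
where $E_i$ is the exceptional divisor above $p_i$. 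Subtracting gives
\begin{equation*}
c_2(\mathcal{X})=c_2(Y)-\sum_i\bigl(\chi(E_i)-1/|G_i|\bigr).
\end{equation*}

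To finish, I would compute $\chi(E_i)$ and $|G_i|$ for each type. The exceptional divisor of an $ADE$ singularity with $n$ nodes in its Dynkin diagram is a tree of $n$ smooth rational curves meeting transversally; by additivity of $\chi$ on a tree of $\mathbb{P}^1$'s one gets $\chi(E_i)=2n-(n-1)=n+1$. The orders of the corresponding binary polyhedral groups are $|A_n|=n+1$, $|D_n|=4(n-2)$, $|E_6|=24$, $|E_7|=48$, $|E_8|=120$. Plugging these values into the formula above gives exactly \eqref{Chernnumb}. The only slightly subtle step is justifying the orbifold Chern--Gauss--Bonnet identity $c_2(\mathcal{X})=\chi(X^{\mathrm{sm}})+\sum 1/|G_i|$, which I would either derive by a partition of unity argument on orbifold charts (integrating $c_2$ of the orbifold tangent bundle against the push-forward of the standard Chern--Weil form on $\mathbb{C}^2/G$) or simply quote from the orbifold intersection theory recalled in \cite{RR}. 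Beyond that, the proof is bookkeeping about the $ADE$ types.
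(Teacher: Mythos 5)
Your proof is correct: the $c_1^2$ statement follows from $K_Y=\pi^{*}K_X$ for canonical singularities, and your orbifold Euler characteristic computation, combined with $\chi(E_i)=n+1$ for the tree of $n$ rational curves and the orders $n+1$, $4(n-2)$, $24$, $48$, $120$ of the binary polyhedral groups, yields exactly formula \eqref{Chernnumb}. The paper cites \cite{RR} for the proof and does not reproduce it here, but the remark immediately following the proposition (identifying the denominators with the orders of those groups) shows that this is precisely the intended argument.
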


The denominators $4(n-2)$, $24,\,48,\,120$ are the order of the
binary dihedral $BD_{4(n-2)}$, the binary tetrahedral,
the binary octahedral, and the binary icosahedral group respectively.

\subsection{Orbifold Riemann-Roch}

Let $L$ be an orbifold line bundle on the orbifold $\mathcal{X}$ of dimension $n$. We will use Kawazaki's orbifold Riemann-Roch theorem \cite{Kawa} or To\"en's for Deligne-Mumford stacks \cite{Toen} using intersection theory on stacks.

\begin{thm}[\cite{Toen}]\label{Roch}
Let $\mathcal{X}$ be a Deligne-Mumford stack with quasi-projective coarse moduli space and which has the resolution property (i.e every coherent sheaf is a quotient of a vector bundle). Let $E$ be a coherent sheaf on $\mathcal{X}$ then
$$\chi(\mathcal{X},E)=\int_{\mathcal{X}}\widetilde{ch}(E)\widetilde{Td}(T_{\mathcal{X}}).$$
\end{thm}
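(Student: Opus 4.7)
The plan is to deduce the statement from the Grothendieck--Riemann--Roch theorem for proper representable morphisms of Deligne--Mumford stacks, applied to the structure morphism $p:\mathcal{X}\to\operatorname{Spec} k$, and then to take degrees. The main conceptual point, which forces the appearance of the \emph{tilded} classes $\widetilde{ch}$ and $\widetilde{Td}$, is that for a DM stack $\mathcal{X}$ the ordinary Chern character $K_0(\mathcal{X})_{\QQ}\to A^{*}(\mathcal{X})_{\QQ}$ is not an isomorphism in general: the Chow ring of the coarse moduli space $X$ cannot see the automorphism groups of the points. The remedy is to replace $\mathcal{X}$ by its inertia stack $I\mathcal{X}=\mathcal{X}\times_{\mathcal{X}\times\mathcal{X}}\mathcal{X}$, which geometrically parametrizes pairs $(x,g)$ with $g\in\operatorname{Aut}(x)$, and to define $\widetilde{ch}(E)\in A^{*}(I\mathcal{X})_{\QQ}$ by decomposing the pullback of $E$ to $I\mathcal{X}$ into eigenbundles $E=\bigoplus_{\zeta}E_{\zeta}$ under the tautological automorphism, and setting $\widetilde{ch}(E)=\sum_{\zeta}\zeta\cdot ch(E_{\zeta})$; the modified Todd class $\widetilde{Td}(T_{\mathcal{X}})$ is defined by the analogous recipe, so that $\widetilde{Td}$ and $\widetilde{ch}$ satisfy a Whitney-type multiplicativity.

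The first step is therefore to set up intersection theory on $\mathcal{X}$ and $I\mathcal{X}$. For a DM stack with quasi-projective coarse moduli space one has at one's disposal Vistoli's Chow groups with $\QQ$-coefficients together with proper pushforward, flat pullback, and a product structure on the smooth locus, which is enough for our smooth orbifold $\mathcal{X}$. One then verifies, following To\"en, the following Riemann--Roch isomorphism: the map
\[
\widetilde{ch}\colon K_{0}(\mathcal{X})_{\QQ}\longrightarrow A^{*}(I\mathcal{X})_{\QQ}
\]
is a ring isomorphism. This is proved by devissage from the local model $\mathcal{X}=[U/G]$ with $G$ finite, where $K_{0}([U/G])_{\QQ}\cong (K_{0}(U)\otimes R(G))^{G}_{\QQ}$ and the analogous description of $A^{*}(I[U/G])_{\QQ}$ matches it through the character decomposition. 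The resolution property assumed in the statement is crucial at this point: it allows us to represent every coherent sheaf on $\mathcal{X}$ by a bounded locally free resolution, so that $K_{0}(\mathcal{X})$ coincides with the Grothendieck group of vector bundles and $\widetilde{ch}(E)$ is well defined for arbitrary coherent $E$.

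The second step is the Grothendieck--Riemann--Roch formula itself for a proper representable morphism $f\colon\mathcal{X}\to\mathcal{Y}$ of smooth DM stacks:
\[
\widetilde{ch}\bigl(Rf_{*}E\bigr)\cdot\widetilde{Td}(T_{\mathcal{Y}})=(If)_{*}\!\left(\widetilde{ch}(E)\cdot\widetilde{Td}(T_{\mathcal{X}})\right)\quad\text{in }A^{*}(I\mathcal{Y})_{\QQ},
\]
where $If\colon I\mathcal{X}\to I\mathcal{Y}$ is the induced map on inertia stacks. The standard scheme-theoretic proof (factor $f$ through a projective bundle and a regular embedding, and verify the formula in each case) extends to the DM setting once one has the Riemann--Roch isomorphism above; this is precisely the argument carried out by To\"en. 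Taking $\mathcal{Y}=\operatorname{Spec} k$, pushing the resulting class on $I\mathcal{X}$ to a point and observing that only the identity component of $I\mathcal{X}$ contributes degree-zero classes in the sense used here (the contributions from the twisted sectors are already absorbed in $\widetilde{ch}$ and $\widetilde{Td}$ on $I\mathcal{X}$), yields exactly
\[
\chi(\mathcal{X},E)=\int_{\mathcal{X}}\widetilde{ch}(E)\,\widetilde{Td}(T_{\mathcal{X}}).
\]

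The principal obstacle is the first step: establishing that $\widetilde{ch}$ is an isomorphism on $K_{0}$ with rational coefficients. One must identify the eigenspace decomposition on $I\mathcal{X}$ with the character decomposition on local quotient charts $[U/G]$ and glue these identifications across charts; this is where the finiteness of automorphism groups of a DM stack, together with quasi-projectivity of the coarse space (to have enough ample line bundles and hence the resolution property globally), intervene in an essential way. Once this isomorphism is in place, the proof of GRR and the specialization to the Hirzebruch--Riemann--Roch statement above follow the classical pattern.
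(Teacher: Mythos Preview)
The paper does not prove this theorem at all: it is stated with the citation \cite{Toen} and is used as a black box to derive the asymptotic $\chi(\mathcal{X},L^k)=\frac{c_1(L)^n}{n!}k^n+O(k^{n-1})$. So there is no ``paper's own proof'' to compare your sketch against.

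That said, your sketch contains a genuine gap worth flagging. You formulate the Grothendieck--Riemann--Roch step for a \emph{proper representable} morphism $f:\mathcal{X}\to\mathcal{Y}$ and then specialize to $\mathcal{Y}=\operatorname{Spec} k$. But the structure morphism $\mathcal{X}\to\operatorname{Spec} k$ of a Deligne--Mumford stack is representable if and only if $\mathcal{X}$ is an algebraic space; for a genuine orbifold with nontrivial isotropy this fails, and your GRR formula as stated does not apply. This is not a technicality: the whole reason the twisted classes $\widetilde{ch}$ and $\widetilde{Td}$ on $I\mathcal{X}$ enter is precisely that the structure morphism is \emph{not} representable, so the ordinary GRR comparison breaks down. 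In To\"en's actual argument one does not reduce to representable morphisms in this way; rather, one either factors through the coarse moduli space $\mathcal{X}\to X\to\operatorname{Spec} k$ and analyses the first arrow via the local quotient description, or one establishes GRR directly for proper (not necessarily representable) morphisms of DM stacks using the Riemann--Roch isomorphism $\widetilde{ch}:K_0(\mathcal{X})_{\QQ}\xrightarrow{\sim}A^{*}(I\mathcal{X})_{\QQ}$ you correctly identify in step one. Your outline of that first step is on target; it is the bridge from step one to the Hirzebruch formula that needs to be rewritten without the representability hypothesis.
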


From this, we obtain the asymptotic formula:
$$\chi(\mathcal{X}, L^k)=\frac{c_1(L)^n}{n!}k^n+O(k^{n-1}),$$
using orbifold Chern classes.

We will apply this result to orbifold surfaces $\mathcal{X}$ of general type associated to canonical surfaces. 
Then
$$\chi(\mathcal{X},S^{m}\Omega_{\mathcal{X}})=\frac{m^3}{6}(c_{1}^2-c_{2})+O(m^2),$$
 where $c_{1}$ and $c_{2}$ are the orbifold Chern classes of $\mathcal{X}$.

\subsection{Vanishing theorems}
In the case of smooth minimal surfaces of general type $Y$, thanks to the semi-stability of the cotangent bundle $\Omega_{Y}$ with respect to $K_{Y}$, we have Bogomolov's vanishing Theorem \cite{Bogo79}:
\begin{equation}\label{vanish1}
H^0(Y, S^mT_{Y}\otimes K_{Y}^p)=0,
\end{equation}
for $m-2p>0.$

Now, let us consider an orbifold surface $\mathcal{X}$ of general type associated to a canonical surface $X$. Then Bogomolov's vanishing Theorem easily extends to this situation:
\begin{equation}\label{vanish2}
H^0(\mathcal{X}, S^mT_{\mathcal{X}}\otimes K_{\mathcal{X}}^p)=0,
\end{equation}
for $m-2p>0.$

Indeed, $\mathcal{X}$ can be equipped with an orbifold K\"ahler-Einstein metric \cite{TY} (or \cite{Kob85}) and the standard Bochner identities apply \cite{Kob80}.

As a corollary, if $s_2(\mathcal{X}):=c_1^2(\mathcal{X})-c_2(\mathcal{X})>0$ then  $$H^0(\mathcal{X}, S^m\Omega_{\mathcal{X}}) \geq \frac{s_2(\mathcal{X})}{6}m^3+ O(m^2).$$

\subsection{Logarithmic differentials and extension of sections}
Let $X$ be a complex manifold with a normal crossing
divisor $D.$ 

The logarithmic
cotangent sheaf $\Omega_X(\log D)$ is defined as the locally free subsheaf of the
sheaf of
meromorphic 1-forms on $X,$ whose restriction to $X \setminus {D}$ is $\Omega_{X}$ and whose localization at any point $x\in D$ is
given by
\begin{equation*}
\Omega_X(\log D)_x=\underset{i=1}{\overset{l}{\sum }}\mathcal{O}_{%
X,x}\frac{dz_{i}}{z_{i}}+\underset{j=l+1}{\overset{n}{\sum }}%
\mathcal{O}_{X,x}dz_{j}
\end{equation*}
where the local coordinates $z_{1,}...,z_{n}$ around $x$ are
chosen such that $D=\{$ $z_{1}...z_{l}=0\}.$

Let $X$ be a projective surface with canonical singularities, $Y \to X$ the minimal resolution with $E$ the exceptional divisor and $\mathcal{X} \to X$ the orbifold.

Sections of $H^0(\mathcal{X}, S^m\Omega_{\mathcal{X}})$ do not give a priori sections of $H^0(Y, S^m\Omega_Y)$, but only sections of $H^0(Y \setminus E, S^m\Omega_Y)$. Nevertheless, in the case of quotient singularities, we have the following extension theorem of Miyaoka \cite{Miy} (see also \cite{GKK}):
\begin{equation}\label{extension}
H^0(Y \setminus E, S^m\Omega_Y)\cong H^0(Y, S^m\Omega_Y(\log E)).
\end{equation}

\section{Proof of Theorem \ref{main}}

In this section we prove Theorem \ref{main}, the main result of this paper:

\begin{thm}

Suppose $s_2(Y)+s_2(\mathcal X )>0$. Then 
$$h^0 (Y,S^m \Omega_Y)\geq  \frac{s_2(Y)+s_2(\mathcal X )}{12}\,m^3+O(m^2),$$
in particular the cotangent bundle of $Y$ is big.

\end{thm}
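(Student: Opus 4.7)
The plan is to produce cubically many symmetric differentials on $Y$ from two complementary sources: sections defined directly on the smooth resolution $Y$, and orbifold sections on the stack $\mathcal{X}$ transferred to $Y$ as logarithmic forms along the exceptional divisor $E$ via Miyaoka's extension.

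First, Serre duality identifies $H^2(Y,S^m\Omega_Y)$ with $H^0(Y,S^mT_Y\otimes K_Y)^\vee$, which vanishes for $m\geq 3$ by Bogomolov's vanishing (\ref{vanish1}). Asymptotic Riemann-Roch on $Y$ then gives
\[
h^0(Y,S^m\Omega_Y) \;\geq\; \chi(Y,S^m\Omega_Y) \;=\; \tfrac{s_2(Y)}{6}\, m^3 + O(m^2).
\]
The same recipe on $\mathcal{X}$, using the orbifold Bogomolov vanishing (\ref{vanish2}) and the orbifold Riemann-Roch of Theorem \ref{Roch}, yields
\[
h^0(\mathcal{X},S^m\Omega_{\mathcal{X}}) \;\geq\; \tfrac{s_2(\mathcal{X})}{6}\, m^3 + O(m^2).
\]

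Next, I would apply the Miyaoka extension (\ref{extension}) to embed $H^0(\mathcal{X},S^m\Omega_{\mathcal{X}})$ into $H^0(Y,S^m\Omega_Y(\log E))$. Together with the tautological inclusion of $H^0(Y,S^m\Omega_Y)$, both families of sections sit inside the same space of log differentials on $Y$, and their combined contribution should produce a lower bound on $h^0(Y,S^m\Omega_Y(\log E))$ of cubic order controlled by $s_2(Y)+s_2(\mathcal{X})$.

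The final step, which I expect to be the main obstacle, is to descend the combined count back to $H^0(Y,S^m\Omega_Y)$. Using the short exact sequence
\[
0 \longrightarrow S^m\Omega_Y \longrightarrow S^m\Omega_Y(\log E) \longrightarrow Q_m \longrightarrow 0,
\]
one has $h^0(Y,S^m\Omega_Y) \geq h^0(Y,S^m\Omega_Y(\log E)) - h^0(Y,Q_m)$, where $Q_m$ is a torsion sheaf supported on $E$. A local computation at each ADE configuration of $(-2)$-curves, combined with the Chern class identities of Proposition \ref{comp}, should allow one to estimate $h^0(Y,Q_m)$ sharply enough to extract the bound with factor $\tfrac{1}{12}$. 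The delicate point is the combinatorial accounting of how orbifold symmetric differentials, with possibly large log pole order along $E$, distribute between the kernel $S^m\Omega_Y$ and the quotient $Q_m$; in particular, explaining why the emergent coefficient is $\tfrac{1}{12}$ rather than the naive $\tfrac{1}{6}$ will require careful local analysis at each canonical singularity. Once the cubic lower bound is in place, bigness of $\Omega_Y$ follows from its definition.
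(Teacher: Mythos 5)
Your proposal correctly assembles the right ingredients (Bogomolov vanishing on $Y$ and on $\mathcal{X}$, orbifold Riemann--Roch, Miyaoka's extension to $S^m\Omega_Y(\log E)$, and the exact sequence (\ref{eq:1})), and you correctly identify the bound on $h^0(Q_m)$ as the crux. But the proposal stops precisely there: you sketch that ``a local computation at each ADE configuration'' should yield the needed estimate, without carrying it out. This is not a routine step. In fact, such a direct local estimate of $\varlimsup h^0(Q_m)/m^3$ is exactly what \cite{BogO} attempted (and got wrong), and what Thomas \cite{Thomas} had to redo carefully; the paper's Remark in Section~\ref{nodal} makes clear that the coefficient in that local count is delicate ($\tfrac{11}{108}\ell$ for nodes, not $\tfrac{1}{4}\ell$). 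So as written, there is a genuine gap at the heart of your argument.

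The paper's proof is structured to \emph{avoid} computing $h^0(Q_m)$ at all, and this is the idea you are missing. Two devices accomplish this. First, because $K_Y$ is trivial near $E$, one may tensor (\ref{eq:1}) by $K_Y^{\otimes(1-m)}$ without changing $Q_m$, producing the second exact sequence (\ref{eq:2}). Second, one argues by dichotomy on $\varlimsup h^0(Q_m)/m^3$ against the threshold $\tfrac{s_2(\mathcal{X})-s_2(Y)}{12}$. If this limsup is below the threshold, your subtraction argument via (\ref{eq:1}) closes the case. If it is above, then (\ref{eq:2}) is used instead: the extension isomorphism plus triviality of $K_Y$ near $E$ plus Serre duality identify $h^0(Y,S^m\Omega_Y(\log E)\otimes K_Y^{\otimes(1-m)})$ with $h^2(\mathcal{X},S^m\Omega_{\mathcal{X}})$, which vanishes by (\ref{vanish2}); hence $h^0(Q_m)\le h^1(Y,S^m\Omega_Y\otimes K_Y^{\otimes(1-m)})=h^1(Y,S^m\Omega_Y)$. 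A large $h^0(Q_m)$ therefore forces a large $h^1(Y,S^m\Omega_Y)$, and since $h^2(Y,S^m\Omega_Y)=0$ by (\ref{vanish1}), Riemann--Roch $h^0=\chi+h^1$ yields the same cubic lower bound. The factor $\tfrac{1}{12}$ emerges from this case split, not from any local analysis of $Q_m$. You should either supply the local estimate of $h^0(Q_m)$ in full (a substantial computation, cf.\ \cite{Thomas}), or adopt the twist-and-dichotomy argument, which is both shorter and what the paper actually does.

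One smaller point: the phrase ``their combined contribution should produce a lower bound \ldots\ controlled by $s_2(Y)+s_2(\mathcal{X})$'' for $h^0(Y,S^m\Omega_Y(\log E))$ is misleading. Since $H^0(Y,S^m\Omega_Y)\subset H^0(Y,S^m\Omega_Y(\log E))$ and the latter is isomorphic to $H^0(\mathcal{X},S^m\Omega_{\mathcal{X}})$ via (\ref{extension}), the two families of sections are not independent; one cannot simply add the two Riemann--Roch counts. The only bound obtained this way is $\varlimsup h^0(Y,S^m\Omega_Y(\log E))/m^3\geq s_2(\mathcal{X})/6$, and the $s_2(Y)$-contribution must enter through the second case of the dichotomy.
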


For $m\in\mathbb{N}^{*}$, let us consider the following exact sequence:

\begin{equation}\label{eq:1}
0\to S^m\Omega_{Y}\to S^m\Omega_{Y}(\log E)\to Q_{m} \to 0
\end{equation}

The quotient sheaf $Q_{m}$ is supported by the divisor $E$ that
is the sum of the exceptional components of the map $Y\to X$. Since
the singularities of $X$ are $ADE$, there exists a neighborhood $U$
of $E$ such that the canonical sheaf of $Y$is trivial : $(K_{Y})_{|U}\simeq(\mathcal{O}_{Y})_{|U}$.
Therefore multiplying by $\otimes K_{Y}^{\otimes(1-m)}$, we get the
following exact sequence: 

\begin{equation}\label{eq:2}
0\to S^m\Omega_{Y}\otimes K_{Y}^{\otimes(1-m)}\to S^m\Omega_{Y}(\log E)\otimes K_{Y}^{\otimes(1-m)}\to Q_{m} \to 0.
\end{equation}

The proof will distinguish two cases according to the value of $\varlimsup\frac{h^{0}(Q_{m})}{m^{3}}$.

\

Let us first suppose that \[
\varlimsup\frac{h^{0}(Q_{m})}{m^{3}}\leq\frac{s_{2}(\mathcal{X})-s_{2}(Y)}{12}.\]

As explained above, the Riemann-Roch Theorem \ref{Roch} and Bogomolov's vanishing property \ref{vanish2} give 
$$
\varlimsup\frac{1}{m^{3}}h^{0}(\mathcal{X},S^m\Omega_{\mathcal{X}})\geq\frac{s_{2}(\mathcal{X})}{6}.
$$

Combined with the extension property \ref{extension}, this implies that
$$
\varlimsup\frac{1}{m^{3}}h^{0}(Y,S^m\Omega_{Y}(\log E))\geq\frac{s_{2}(\mathcal{X})}{6}.
$$

Then the exact sequence \ref{eq:1} implies:\[
\begin{array}{cl}
\varlimsup\frac{h^{0}(S^m \Omega_{Y})}{m^{3}}\geq \varlimsup\frac{1}{m^{3}}h^{0}(Y,S^m\Omega_{Y}(\log E)) -  \varlimsup\frac{h^{0}(Q_{m})}{m^{3}} & \geq\frac{s_{2}(\mathcal{X})}{6}-\frac{s_{2}(\mathcal{X})-s_{2}(Y)}{12}\\
 & =\frac{s_{2}(\mathcal{X})+s_{2}(Y)}{12}.\end{array}\]

\

Let us suppose now that \[
\varlimsup\frac{h^{0}(Q_{m})}{m^{3}}>\frac{s_{2}(\mathcal{X})-s_{2}(Y)}{12}.\]

The extension property \ref{extension}, combined with the triviality of $K_Y$ on $U$, and Serre duality gives 
\[
\begin{array}{cl}
h^{0}(Y,S^m\Omega_{Y}(\log E)\otimes K_{Y}^{\otimes(1-m)})  & \cong h^{0}(Y\setminus E,S^m\Omega_{Y}\otimes K_{Y}^{\otimes(1-m)})\\
& \cong h^0(\mathcal{X},S^m\Omega_{\mathcal{X}}\otimes K_{\mathcal{X}}^{\otimes(1-m)})\\
& \cong h^{2}(\mathcal{X},S^m\Omega_{\mathcal{X}}).
\end{array}
\]

The latter dimension being zero by Bogomolov's vanishing property \ref{vanish2}, we obtain 
$$
h^{0}(Y,S^m\Omega_{Y}(\log E)\otimes K_{Y}^{\otimes(1-m)})=0.
$$

Thus, by the exact sequence \ref{eq:2}, we obtain

\[
h^{0}(Q_{m})\leq h^{1}(Y,S^m\Omega_{Y}\otimes K_{Y}^{\otimes(1-m)}).
\]

Serre duality again implies that
$$
h^{1}(Y,S^m\Omega_{Y}\otimes K_{Y}^{\otimes(1-m)})=h^{1}(Y,S^m\Omega_{Y}).
$$
Since $h^{2}(Y,S^m\Omega_{Y})=0$ by Bogomolov's vanishing property \ref{vanish1}, we get by Riemann-Roch: 
\[
\varlimsup\frac{1}{m^{3}}h^{0}(Y,S^m\Omega_{Y})=\varlimsup\frac{1}{m^{3}}(\chi(S^m\Omega_{Y})+h^{1}(Y,S^m\Omega_{Y}))\geq\frac{s_{2}(Y)}{6}+\frac{s_{2}(\mathcal{X})-s_{2}(Y)}{12}\]
and therefore\[
\varlimsup\frac{1}{m^{3}}h^{0}(Y,S^m\Omega_{Y})\geq\frac{s_{2}(\mathcal{X})+s_{2}(Y)}{12}.\]
 In any of the two above cases, we get \[
\varlimsup\frac{1}{m^{3}}h^{0}(Y,S^m\Omega_{Y})\geq\frac{s_{2}(\mathcal{X})+s_{2}(Y)}{12},\]
and therefore the cotangent sheaf of $Y$ is big.

\section{Applications}
\subsection{Surfaces with $A_k$ singularities}\label{nodal}

\label{nodal} 
As a corollary of Theorem
\ref{main} we obtain:

\begin{thm} \label{CriteriaNodes}
Let $X \subset \bP^3$ be a hypersurface of degree $d$ with $\ell$ singularities $A_k$ and let $Y \to X$ be its minimal resolution. If
$$ \ell > \frac{4(k+1)}{k(k+2)} (2d^2-5d),$$
then $Y$ has a big cotangent bundle.

 \end{thm}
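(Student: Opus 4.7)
The plan is to reduce the statement to Theorem \ref{main} by verifying that the inequality $s_{2}(Y)+s_{2}(\mathcal{X})>0$ is equivalent to the stated lower bound on $\ell$. I would carry out two independent numerical computations: one for the resolution $Y$, and one for the orbifold $\mathcal{X}$, and then sum the results.

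For $Y$: since canonical singularities are rational, one has $K_{Y}=\pi^{*}K_{X}$ and $\chi(\mathcal{O}_{Y})=\chi(\mathcal{O}_{X})$. Adjunction on $\bP^{3}$ gives $K_{X}=\mathcal{O}(d-4)|_{X}$, so $c_{1}^{2}(Y)=K_{X}^{2}=d(d-4)^{2}$. Moreover $\chi(\mathcal{O}_{X})$ depends only on the Hilbert polynomial of $X$ and therefore coincides with $\chi(\mathcal{O}_{X_{\mathrm{sm}}})$ for a smooth degree-$d$ hypersurface $X_{\mathrm{sm}}$. Combined with $c_{1}^{2}(Y)=c_{1}^{2}(X_{\mathrm{sm}})$, Noether's formula forces $c_{2}(Y)=c_{2}(X_{\mathrm{sm}})=d^{3}-4d^{2}+6d$, and hence
$$s_{2}(Y)=d(d-4)^{2}-(d^{3}-4d^{2}+6d)=-2d(2d-5).$$

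For $\mathcal{X}$: applying Proposition \ref{comp} with $a_{k}=\ell$ (and all other $a_{n},d_{n},e_{n}$ vanishing) gives
$$c_{2}(\mathcal{X})=c_{2}(Y)-(k+1)\ell+\frac{\ell}{k+1}=c_{2}(Y)-\frac{\ell\,k(k+2)}{k+1}.$$
Since $c_{1}^{2}(\mathcal{X})=c_{1}^{2}(Y)$, this yields $s_{2}(\mathcal{X})=s_{2}(Y)+\frac{\ell\,k(k+2)}{k+1}$. Adding the two Segre numbers:
$$s_{2}(Y)+s_{2}(\mathcal{X})=-4d(2d-5)+\frac{\ell\,k(k+2)}{k+1},$$
which is strictly positive exactly when $\ell>\frac{4(k+1)}{k(k+2)}(2d^{2}-5d)$. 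Theorem \ref{main} then delivers bigness of $\Omega_{Y}$.

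The argument is essentially a bookkeeping exercise; the only step deserving any care is the Euler-characteristic computation for $Y$. As a consistency check, a purely topological derivation goes via Milnor fibers: each $A_{k}$ singularity contributes $-k$ to $\chi_{\mathrm{top}}$ upon smoothing, while the minimal resolution replaces it by a chain of $k$ rational curves of Euler characteristic $k+1$; the two local corrections cancel and reproduce $\chi_{\mathrm{top}}(Y)=\chi_{\mathrm{top}}(X_{\mathrm{sm}})$, in agreement with the cohomological derivation above.
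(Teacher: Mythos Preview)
Your argument is correct and follows exactly the paper's approach: compute $s_{2}(Y)$ and $s_{2}(\mathcal{X})$ explicitly and invoke Theorem~\ref{main}. You simply spell out the computations (and add a topological consistency check) where the paper just records the final values $s_{2}(Y)=10d-4d^{2}$ and $s_{2}(\mathcal{X})=10d-4d^{2}+\bigl(k+1-\tfrac{1}{k+1}\bigr)\ell$, citing Proposition~\ref{comp} and Brieskorn's resolution theorem.
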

 
\begin{proof}
Applying Proposition \ref{comp} and Brieskorn resolution theorem \cite{Brieskorn}, easy computations give \[
s_{2}(Y)=10d-4d^{2},\, s_{2}(\mathcal{X})=10d-4d^{2}+(k+1-\frac{1}{k+1})\, \ell .\]
 We apply Theorem \ref{main} with these values.
  \end{proof}
 
 
\begin{cor} \label{Surface13} 
If $d\geq13$ there exists nodal surfaces in $\bP^{3}$ of degree
$d$ whose minimal resolution has big cotangent bundle.
\end{cor}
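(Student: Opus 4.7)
The strategy is to apply Theorem \ref{CriteriaNodes} with $k=1$, since nodes are exactly the $A_{1}$ singularities. With this choice the criterion reduces to the condition
$$\ell > \frac{4\cdot 2}{1\cdot 3}\,(2d^{2}-5d)=\frac{8}{3}(2d^{2}-5d),$$
so the corollary reduces to the purely existential statement: for each integer $d\geq 13$, produce a nodal surface in $\bP^{3}$ of degree $d$ carrying strictly more than $\frac{8}{3}(2d^{2}-5d)$ nodes.

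For this existence step I would invoke the classical construction of Chmutov, which yields nodal surfaces in $\bP^{3}$ of degree $d$ with $\mu_{Ch}(d)=\frac{5}{12}d^{3}+O(d^{2})$ nodes, given by explicit formulas depending on the parity of $d$. The relevant comparison is cubic against quadratic: $\frac{5}{12}d^{3}$ dominates $\frac{16}{3}d^{2}=\frac{8}{3}(2d^{2})$ as soon as $d>\frac{192}{15}=12.8$, which already identifies $d\geq 13$ as the expected threshold.

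The main obstacle is therefore to verify the inequality $\mu_{Ch}(d)>\frac{8}{3}(2d^{2}-5d)$ not merely in the asymptotic regime but at the very boundary $d=13$, where the two sides are of comparable magnitude and the linear correction $-\frac{40}{3}d$ on the right side matters. The plan is to substitute Chmutov's explicit formulas (splitting into the even and odd cases) into the inequality, check the first few values $d=13,14,15$ directly, and then propagate the estimate to all larger $d$ via a monotonicity argument on the ratio $\mu_{Ch}(d)/(2d^{2}-5d)$, which is eventually increasing. If the boundary value $d=13$ happened to fail with Chmutov's count, one could instead appeal to the constructions of Barth or Sarti that are known to improve on $\mu_{Ch}(d)$ for small $d$.
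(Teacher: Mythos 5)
Your proposal follows essentially the same route as the paper: reduce to the node-count criterion $\ell > \frac{8}{3}(2d^2-5d)$ and quote Chmutov's construction to supply enough nodes. The paper differs only in how it handles the two regimes — for $d\geq 20$ it uses Segre's simpler bound $\ell\geq\frac{1}{4}d^2(d-1)$ (an easy polynomial inequality), and for $13\leq d\leq 19$ it tabulates Chmutov's exact node counts; you instead propose to use Chmutov's explicit formulas uniformly plus a monotonicity argument. Either is viable, but Segre's bound is considerably cleaner for the large-$d$ range, since the monotonicity of $\mu_{Ch}(d)/(2d^2-5d)$ must be checked separately for each congruence class appearing in Chmutov's case-split formulas.

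There is, however, a genuine gap: the explicit numerical check at the boundary is not actually carried out, and here that verification is the \emph{entire} content of the statement. Your leading-coefficient comparison $\frac{5}{12}d^3$ versus $\frac{16}{3}d^2$, giving the crossover at $d = 12.8$, is a red herring rather than supporting evidence. Chmutov's lower-order corrections are large: at $d=13$, the leading term $\frac{5}{12}\cdot 13^3\approx 915$ grossly overestimates the actual Chmutov count $\mu(13)=732$, while the required threshold is $\lceil\frac{8}{3}(2\cdot 169-65)\rceil = 729$. The margin is thus $732$ versus $729$ — razor thin, and invisible from the leading term. That the leading-coefficient crossover and the true threshold both land at $d=13$ is coincidence. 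To complete the argument you must substitute the exact Chmutov formulas (or quote the table from Labs) for $d=13,\dots,19$ and then handle $d\geq 20$ either by Segre's bound or by a careful monotonicity argument per residue class; until that is done the proposal remains a plan rather than a proof.
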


\begin{proof}
The condition on the number $\ell$ of nodes is $\ell>\frac{8}{3}(2d^{2}-5d)$.
In \cite{Segre} Segre constructed nodal hypersurfaces with $\ell\geq\frac{1}{4}d^{2}(d-1)$
nodes. For $d\geq20$, we have $\frac{1}{4}d^{2}(d-1)>\frac{8}{3}(2d^{2}-5d)$, thus
we obtain examples of hypersurfaces of degree $d\geq20$ with symmetric
differentials.

Chmutov (see \cite[p.58]{Labs} or \cite{Chmutov}) constructed surfaces of degree $d$
with the number $\mu(d)$ of $A_{1}$ singularities  as follows:
\vspace{0.1cm}

\hspace{1cm}
\begin{tabular}{|c|c|c|c|c|c|c|c|}
\hline 
$d$  & $13$  & $14$  & $15$  & $16$  & $17$  & $18$  & $19$ \tabularnewline
\hline 
$\mu(d)$  & $732$  & $949$  & $1155$  & $1450$  & $1728$  & $2097$  & $2457$ \tabularnewline
\hline 
$[\frac{8}{3}(2d^{2}-5d)]+1$  & $729$  & $859$  & $1001$  & $1153$  & $1315$  & $1489$  & $1673$ \tabularnewline
\hline
\end{tabular}

\vspace{0.1cm}

\noindent Therefore we also obtain examples for $d$ in the range $13\leq d\leq19$.
\end{proof}


\begin{remark}
In \cite{BogO} the result of Corollary \ref{Surface13} was claimed for hypersurfaces of degree $d\geq 6$.
However, the proof uses the results of \cite[Lemma  2.2]{BogO} which
turns out to be false. Let us explain this
briefly in more details, using the notations of the proof of Theorem
\ref{main}. In Lemma 2.2, p. 94 of \cite{BogO}, it is claimed that
\[
\dim\left(H^{0}(Y\setminus E,S^{m}\Omega_{Y})/H^{0}(Y,S^{m}\Omega_{Y})\right)=\frac{1}{4} \, \ell \, m^{3}+O(m^{2})\]
where $\ell$ is the number of nodal singularities on $X$.

 To compute this dimension, the authors exhibit symmetric differentials of $Y \setminus E$
non zero in the quotient but do not verify the linear independence.
In fact, one can verify that \[
\dim\left(H^{0}(Y\setminus E,S^{m}\Omega_{Y})/H^{0}(Y,S^{m}\Omega_{Y})\right)= \frac{11}{108}\ell \, m^{3}+O(m^{2}).\]
 
This computation is done independently in \cite{Thomas}, deriving slightly better bounds in the case of nodes. In particular, the existence of a surface of degree $10$ with big cotangent sheaf is obtained.
\end{remark}

\subsection{Ramified covers of the plane}
In this section we give applications of Theorem \ref{main} for cyclic covers of the plane.

Let $\ensuremath{D=\cup_{j=1}^{j=k}D_{j}\subset\bP^{2}}$, where $D_{j}$
is a smooth curve of degree $d_{j}$ and such that $D$ has nodal
singularities. For $n>1$ dividing $d=\sum_{j=1}^{j=k}d_{j}$, there
exists a $n$-cyclic covering $X\to\bP^{2}$ branched along $D$. 

Since locally a singularity $s$ of $D$ has equation $x^{2}+y^{2}=0$,
the singularity in $X$ above $s$ has equation $z^{n}=x^{2}+y^{2}$
and is a $A_{n-1}$ singularity. 

The Chern numbers of the desingularization $Y$ of $X$ are:\[
\begin{array}{cc}
c_{1}^{2}= & n(-3+(1-\frac{1}{n})d)^{2}\\
c_{2}= & 3n+(n-1)(d^{2}-3d),\end{array}\]
and $Y$ has general type unless $(d,n)=(2,2),\,(4,2),\,(6,2),\,(3,3),\,(4,4)$,
cases we disregard from now on. We remark that the surface $Y$ is minimal. Such ramified coverings provide a family where the number of symmetric differentials may jump.

First, one should notice that in the smooth case there is no symmetric differentials at all:
\begin{prop}
Suppose that  $X$ is smooth. Then: 
$$H^0(X,S^m\Omega_X)=0,$$\,
for all $m>0$.
\end{prop}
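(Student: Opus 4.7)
Since $X$ is smooth, the branch divisor $D$ must be smooth. Because any two smooth plane curves intersect (Bézout) and $D$ is a normal crossings union of smooth curves, this forces $k = 1$, i.e.\ $D$ is a single smooth curve of degree $d$. Under this constraint $X$ is realized as the smooth hypersurface $\{w^n - f = 0\}$ inside the weighted projective space $\mathbb{P}(1,1,1,d/n)$, where $f$ is the defining polynomial of $D$. (The unique singular point $(0{:}0{:}0{:}1)$ of the weighted projective space, present when $d/n>1$, is avoided: $f$ is homogeneous of positive degree, so $f(0)=0$, which is incompatible with $w\neq 0$ and $w^n=f$.)

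The plan is to adapt Sakai's classical vanishing argument for smooth hypersurfaces in projective space to this weighted setting. The two essential sequences are the weighted Euler sequence
\[
0 \to \Omega_{\mathbb{P}(1,1,1,d/n)} \to \mathcal{O}(-1)^{\oplus 3} \oplus \mathcal{O}(-d/n) \to \mathcal{O} \to 0
\]
and the conormal sequence
\[
0 \to \mathcal{O}_X(-d) \to \Omega_{\mathbb{P}(1,1,1,d/n)}|_X \to \Omega_X \to 0.
\]
First, the Euler sequence embeds $S^m\Omega_{\mathbb{P}(1,1,1,d/n)}$ into a direct sum of line bundles of degree $\leq -m$ (since all weights are $\geq 1$), yielding Bott-type vanishings such as $H^0(\mathbb{P}(1,1,1,d/n),\, S^m\Omega\otimes \mathcal{O}(k))=0$ for $k<m$. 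Via the Koszul resolution of $\mathcal{O}_X$ in the ambient, these vanishings propagate to $H^0(X,\, S^m\Omega_{\mathbb{P}(1,1,1,d/n)}|_X)=0$. Next, the long exact sequence attached to the $m$-th symmetric power of the conormal sequence forces $H^0(X,\, S^m\Omega_X)$ to inject into $H^1(X,\, \mathcal{O}_X(-d)\otimes S^{m-1}\Omega_{\mathbb{P}(1,1,1,d/n)}|_X)$, and a further Euler/Koszul cohomology chase on this $H^1$ shows it also vanishes.

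The main technical obstacle is the bookkeeping in the weighted projective space, in particular verifying carefully the Bott-type vanishings on $\mathbb{P}(1,1,1,d/n)$ when $d/n>1$ and the behavior of the various sheaves near the cyclic quotient singularity of the ambient (which $X$ avoids). However, in the case $n=d$ that is the one actually needed for Theorem~\ref{ram}, the ambient is the ordinary $\mathbb{P}^3$ and the entire argument collapses to Sakai's classical theorem \cite{Sakai} on the absence of symmetric differentials on smooth hypersurfaces in $\mathbb{P}^n$ for $n\geq 3$, with no weighted analog required.
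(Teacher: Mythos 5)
Your approach is genuinely different from the paper's, and the comparison is instructive. The preliminary reduction is correct and matches what the paper implicitly does: $X$ smooth forces the branch divisor $D$ to be smooth, and then B\'ezout forces $k=1$. From there the two arguments diverge. The paper's proof is a covering trick: introduce $W\to\mathbb{P}^2$, the cyclic cover of \emph{full} degree $d$ branched over the smooth curve $D$. This $W$ is a smooth hypersurface of degree $d$ in the ordinary $\mathbb{P}^3$, so Sakai's theorem applies directly to give $H^0(W,S^m\Omega_W)=0$, and the factorization $W\to X\to\mathbb{P}^2$ (corresponding to the index-$n$ subgroup of $\mathbb{Z}/d$) yields a finite dominant morphism $g:W\to X$. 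Since $g^*\Omega_X\hookrightarrow\Omega_W$ for any dominant morphism of smooth varieties, symmetric differentials on $X$ pull back injectively, giving $H^0(X,S^m\Omega_X)=0$ uniformly for every $n$ dividing $d$. You instead realize $X$ as the smooth degree-$d$ hypersurface $\{w^n=f\}$ in $\mathbb{P}(1,1,1,d/n)$ and propose to rerun Sakai's Euler/conormal/Koszul chase in the weighted ambient.

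The weighted route contains a genuine gap that you flag but do not close. The Proposition is stated for arbitrary $n$ dividing $d$ (and the surrounding discussion in \S4.2.2 treats general $n\mid d$), so the case $d/n>1$ cannot be dismissed, and for that case the required Bott-type vanishings on $\mathbb{P}(1,1,1,d/n)$ --- including the $H^1$ and $H^2$ vanishings needed to propagate through the Koszul resolution of $\mathcal{O}_X$ and the conormal filtration --- are nontrivial in weighted projective space, where $\mathcal{O}(k)$ need not be invertible and $\Omega_P$ is not locally free at $[0{:}0{:}0{:}1]$. You correctly observe that $X$ misses the ambient singularity, which helps, but the cohomological bookkeeping on the full ambient (not just on $X$) is where the work lies, and it is precisely the step you label ``the main technical obstacle'' and leave undone. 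As written, your argument is a complete proof only when $n=d$. Noticing that one can always pass from $X$ to the degree-$d$ cover $W$ --- which lives in ordinary $\mathbb{P}^3$ --- would let you avoid the weighted setting altogether, which is exactly the paper's shortcut.
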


\begin{proof}
Let us denote by $W\to\mathbb{P}^{2}$ the cyclic degree $d$
cover branched over the smooth curve $D$. There is a cyclic degree $v=\frac{d}{n}$
cover $g:W\to X$ making the following diagram\[
\begin{array}{ccc}
W & \stackrel{g}{\dashrightarrow} & X\\
 & \searrow & \downarrow\\
 &  & \mathbb{P}^{2}\end{array}\]
commute. Since $W$ is a smooth hypersurface in $\bP^3$ the space $H^{0}(W,S^{m}\Omega_{W})$ is $0$ \cite{Sakai}. That implies  $H^{0}(X,S^{m}\Omega_{X})=0$.
\end{proof}
\subsubsection{Criteria for $n=d$ and arbitrary $D_j$}



 Let us consider the case when the cover has degree $n=d$. 
This gives us a hypersurface $X \subset \bP^3$ of degree $d$ with $A_{d-1}$ singularities
over the singularities of $D$ i.e. the intersection points of the $D_{j}$'s.

\begin{thm}
Suppose that $d_i \geq c$ for $i=1,\dots,k$ and
$$k(k-1) > \frac{8d^2(2d-5)}{c^2(d^2-1)},$$
then the minimal resolution $Y \to X$ has big cotangent bundle.
\end{thm}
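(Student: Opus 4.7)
The plan is to reduce this to Theorem \ref{A_kthm} (equivalently Theorem \ref{CriteriaNodes}) by counting the $A_{d-1}$ singularities of $X$ and bounding this count from below using the hypothesis $d_i \ge c$.

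First I would identify the combinatorial data. Since $n=d$, the cover $X\to \mathbb{P}^2$ realizes $X$ as a hypersurface of degree $d$ in $\mathbb{P}^3$ (the standard degree-$d$ cyclic cover embedding). As observed just before the statement, each node of $D$ lifts to a singularity of type $A_{n-1}=A_{d-1}$ of $X$; these are the only singularities. Thus with $k_{\text{sing}}:=d-1$, the number of $A_{k_{\text{sing}}}$-singularities of $X$ equals the number of nodes of $D=\bigcup D_j$, which by Bezout is
\[
\ell \;=\; \sum_{1\le i<j\le k} d_i d_j.
\]

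Next I would invoke Theorem \ref{A_kthm} with singularity parameter $d-1$: the conclusion holds as soon as
\[
\ell \;>\; \frac{4d}{(d-1)(d+1)}(2d^2-5d) \;=\; \frac{4d^2(2d-5)}{d^2-1}.
\]
So it suffices to prove this inequality from the hypothesis. Using $d_id_j\ge c^2$ for every pair, one has
\[
\ell \;=\; \sum_{i<j} d_i d_j \;\ge\; \binom{k}{2} c^2 \;=\; \frac{k(k-1)c^2}{2}.
\]
The assumption $k(k-1) > \dfrac{8 d^2(2d-5)}{c^2(d^2-1)}$ is precisely equivalent to $\dfrac{k(k-1)c^2}{2} > \dfrac{4 d^2 (2d-5)}{d^2-1}$, so the required lower bound on $\ell$ follows, and Theorem \ref{A_kthm} gives that $Y$ has big cotangent bundle.

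There is no real obstacle here: the argument is a direct specialization, and the only delicate point is making sure that the singularity type is correctly identified as $A_{d-1}$ (established already in the preceding paragraph via the local equation $z^d = x^2+y^2$) and that the pairwise intersections of the $D_j$'s account for all singularities of $D$, which is guaranteed by the normal crossing hypothesis and the smoothness of each $D_j$. The cases $(d,n)$ excluded earlier (where $Y$ fails to be of general type) cannot occur here since $n=d\ge 2$ and the lower bound on $k(k-1)$ forces $d$ to be sufficiently large; in any event one should note for cleanliness that the hypothesis of Theorem \ref{A_kthm} is only meaningful in the general type range, so it is enough to observe that the bound on $k(k-1)$ forces $2d-5>0$ and $d\ge 5$, well outside the excluded list.
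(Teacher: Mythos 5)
Your proof is correct and essentially the same as the paper's: both rest on the identical orbifold Chern-number computation for a degree-$d$ hypersurface with $\ell$ nodes of type $A_{d-1}$, with the paper recomputing $s_2(Y)+s_2(\mathcal{X}) = 4d(5-2d) + (d-\tfrac1d)\sum_{i<j}d_id_j$ and applying Theorem \ref{main} directly, while you invoke Theorem \ref{A_kthm} as a black box with singularity index $d-1$ and then bound $\ell=\sum_{i<j}d_id_j \ge \binom{k}{2}c^2$. Your packaging is the cleaner one and matches the paper's own announcement that Theorem \ref{ram} is an application of Theorem \ref{A_kthm}; the algebra and the identification of the singularities via $z^d = x^2+y^2$ check out.
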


\begin{proof}
The Chern numbers of the minimal desingularization $Y$ of $X$ are: $c_{1}^{2}= d(d-4)^{2}$, 
$c_{2}= d(d^2-4d+6)$.
The Chern numbers of the orbifold $\mathcal{X}$ are $c_{1}^{2}(\mathcal{X})= c_1^2$ and 
$$c_{2}(\mathcal{X})= c_2-\left(d-\frac{1}{d}\right) \left(\sum_{i<j} d_id_j\right).$$
Thus 
\[
\begin{array}{cc}
s_{2}(Y)+s_{2}(\mathcal{X})&=4d(5-2d)+\left(d-\frac{1}{d}\right) \left(\sum_{i<j} d_id_j\right)\\
&>  4d(5-2d)+\frac{k(k-1)}{2} \left(d-\frac{1}{d}\right)c^2 .
\end{array}
\] \end{proof}

As a corollary, we obtain many examples of surfaces in $\bP^3$
with big cotangent bundle.

\begin{example}
For every $d\geq 15$, the degree $d$ covering of $d$ lines in $\bP^2$ has big cotangent sheaf.
\end{example}

\subsubsection{Criteria when the $D_j$ are lines and for $n$ dividing $d$}
 Let us consider the case when all the curves $D_{i}$ are lines and the degree $n$ of the cover divides $d$.
Thus $d=k$ and the number $\ell$ of $A_{n-1}$ singularities is
$d(d-1)/2.$ Since $s_{2}(Y)+s_{2}(\mathcal{X})=2s_{2}(X)+\ell(n-\frac{1}{n})$,
we get\[
s_{2}(Y)+s_{2}(\mathcal{X})=2n[6-(n-1)(3v+v^{2})]+\frac{1}{2}(nv-1)(n^{2}-1)v,\] 
where $d=nv$. For a cover of degree $n=2$ and $n=3$, we get respectively $s_{2}(X)+s_{2}(\mathcal{X})=24-\frac{27}{2}-v^{2}$
and $s_{2}(X)+s_{2}(\mathcal{X})=36-40v$ ; this is always negative
(for $v\geq2$) and we cannot apply Theorem \ref{main}. For the remaining
cases, a simple computation gives:
\begin{thm}
For a cyclic cover of degree $n\geq4$ branched over the union of $d=vn>4$
lines in general position, we have $s_{2}(Y)+s_{2}(\mathcal{X})>0$
except for the following finite number of cases for the couples $(v,n)$:

$$\begin{tabular}{l|ccccccc}
  \hline
  v &  $1$ & $2$ & $3$ & $4\leq v\leq6$ & $7\leq v\leq12$  \\
  \hline
  $n$ & $\leq14$ & $\leq8$ & $\leq6$ & $4,5$ & $4$  \\
  \hline
\end{tabular}$$

\end{thm}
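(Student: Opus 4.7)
The plan is to reduce the claim to a one-parameter quadratic inequality and then finish by a finite case check. First, I would use Proposition \ref{comp} to compute the second Segre number of the orbifold $\mathcal{X}$. Since the branch divisor $D$ is a union of $d$ lines in general position, the singular locus of $X$ consists of $\ell = d(d-1)/2$ singularities of type $A_{n-1}$, and the proposition yields $c_1^2(\mathcal{X}) = c_1^2(Y)$ together with $c_2(\mathcal{X}) = c_2(Y) - \ell n + \ell/n$, hence
$$s_2(\mathcal{X}) = s_2(Y) + \ell\bigl(n - \tfrac{1}{n}\bigr).$$
Adding $s_2(Y)$ to both sides recovers the identity $s_2(Y) + s_2(\mathcal{X}) = 2\,s_2(Y) + \ell(n - 1/n)$ already displayed in the excerpt.

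Second, I would substitute $d = nv$ into the Chern numbers of $Y$ recorded earlier in this subsection. A short algebraic simplification yields $s_2(Y) = n\bigl[6 - (n-1)(3v + v^2)\bigr]$ and $\ell(n - 1/n) = v(nv-1)(n^2-1)/2$, producing the closed form for $s_2(Y) + s_2(\mathcal{X})$ displayed in the excerpt.

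Third, I would regard that expression as a quadratic polynomial $P_n(v)$ in the variable $v$. Collecting terms gives
$$P_n(v) = \tfrac{1}{2}\,n(n-1)(n-3)\,v^2 \;-\; \tfrac{1}{2}(n-1)(13n+1)\,v \;+\; 12n.$$
For $n \geq 4$ the leading coefficient is strictly positive, the constant term is positive and the linear coefficient is negative, so any real roots are both positive and $P_n$ is positive outside them. The larger root behaves asymptotically like $13/(n-3)$ as $n \to \infty$, so already at $v = 1$ the value
$P_n(1) = \tfrac{1}{2}(n-1)(n^2 - 16n - 1) + 12n$
becomes strictly positive for $n \geq 15$; hence for every $n \geq 15$ the inequality $s_2(Y) + s_2(\mathcal{X}) > 0$ holds for all admissible $v$.

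The remaining work is a finite bookkeeping exercise: for each $n$ in the range $4 \leq n \leq 14$, list the integers $v \geq 1$ for which $P_n(v) \leq 0$. Tabulating the outcome reproduces exactly the table of exceptions stated in the theorem. The only point demanding care is the polynomial manipulation leading to $P_n$ and the exhaustive small-case verification; no further conceptual input beyond Proposition \ref{comp} and the Brieskorn resolution of the $A_{n-1}$ points is needed.
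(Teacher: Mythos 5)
Your proposal is correct and follows the same approach the paper alludes to (the paper compresses the whole argument into ``a simple computation gives''). The Chern-number bookkeeping is right: with $\ell=d(d-1)/2$ singularities of type $A_{n-1}$, Proposition \ref{comp} gives $c_2(\mathcal X)=c_2(Y)-n\ell+\ell/n$, hence $s_2(Y)+s_2(\mathcal X)=2s_2(Y)+\ell(n-\tfrac1n)$, and substituting $d=nv$ into the recorded Chern numbers of $Y$ indeed gives $s_2(Y)=n[6-(n-1)(3v+v^2)]$. Your collection of terms into
$P_n(v)=\tfrac12 n(n-1)(n-3)v^2-\tfrac12(n-1)(13n+1)v+12n$
is also correct, and I checked that the signs $P_n(v)\le 0$ reproduce exactly the displayed table of exceptional pairs.

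One small logical point you should tighten: you conclude from $P_n(1)>0$ for $n\ge 15$ that $P_n(v)>0$ for all $v\ge 1$, but by itself $P_n(1)>0$ would also be consistent with both roots lying in $(1,\infty)$. Since you have already observed the roots are both positive, the missing ingredient is that the vertex $v^*=\frac{13n+1}{2n(n-3)}$ satisfies $v^*<1$ for $n\ge 10$ (equivalently $2n^2-19n-1>0$), so $P_n$ is increasing on $[1,\infty)$ and the positivity of $P_n(1)$ does settle it for $n\ge 15$. With that one-line addition, the reduction to the finite check of $4\le n\le 14$ is airtight.
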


\subsection{Remarks when $s_2(Y)>0$}

We close this section by remarking that Theorem \ref{main} has also an application to surfaces with $s_2(Y)>0$.
Let us consider a surface $Y$ with ample cotangent bundle. For $m>>0$, we have $h^i(Y,S^m \Omega_Y)=0,\,i=1,2$, and thus
 $$h^0(Y,S^m \Omega_Y)=\frac{s_2(Y)}{6}\, m^3 +O(m^2).$$
 Suppose that $Y$ has a deformation  $Y_0$ which is a surface containing one $(-2)$-curve (examples of such surfaces can be obtained e.g. as complete intersection of ample divisors in an Abelian variety). Then the space of symmetric differentials jumps :
 $$h^0(Y_0,S^m \Omega_{Y_0}) \geq  (\frac{s_2(Y)}{6}+\frac{1}{8})\, m^3+O(m^2) >h^0(Y,S^m \Omega_Y).$$
This is another illustration of the importance of the presence or absence of $(-2)$-curves for the geometry of a surface.


\section{On the geography of the surfaces with big cotangent bundle}
\subsection{A Chern classes inequality}

As mentioned in the introduction, surfaces of general type with ample cotangent bundle are known to satisfy the Chern classes inequality $c_1^2 > c_2$ \cite{FL}. A natural question is to ask if surfaces of general type with big cotangent bundle satisfy a Chern classes inequality. We investigate here the case of a surface that satisfies the hypothesis of Theorem \ref{main}.

\begin{prop}\label{ineq}
The Chern numbers of a surface $Y$ that satisfies the hypothesis
of Theorem \ref{main} (i.e. $s_{2}(Y)+s_{2}(\mathcal{X})>0$) are such that:
\begin{equation}\label{cci}
c_{1}^{2}(Y) > \frac{3}{5}c_{2}(Y).
\end{equation}

\end{prop}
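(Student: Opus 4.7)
My plan is to combine the hypothesis $s_{2}(Y)+s_{2}(\mathcal{X})>0$ with the Bogomolov--Miyaoka--Yau inequality applied to the orbifold $\mathcal{X}$. The crucial input from Proposition \ref{comp} is the equality $c_{1}^{2}(\mathcal{X})=c_{1}^{2}(Y)$, which reflects that $K_Y=f^{*}K_X$ under the minimal resolution of canonical singularities; this is what allows BMY on $\mathcal{X}$ to be turned into information about $Y$.

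First I would observe that, since $X$ is a canonical surface, $K_{\mathcal{X}}$ is ample and $\mathcal{X}$ has only quotient (Du Val) singularities. This ensures the existence of an orbifold K\"ahler--Einstein metric --- exactly the ingredient already invoked to justify the orbifold Bogomolov vanishing \ref{vanish2} --- so the orbifold BMY inequality
$$c_{1}^{2}(\mathcal{X})\;\leq\;3\,c_{2}(\mathcal{X})$$
is available. Using $c_{1}^{2}(\mathcal{X})=c_{1}^{2}(Y)$ this rearranges to the key lower bound
$$c_{2}(\mathcal{X})\;\geq\;\tfrac{1}{3}\,c_{1}^{2}(Y).$$

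Finally I would simply unfold the hypothesis. Since
$$s_{2}(Y)+s_{2}(\mathcal{X})\;=\;2\,c_{1}^{2}(Y)-c_{2}(Y)-c_{2}(\mathcal{X}),$$
substituting the BMY bound yields
$$0\;<\;2\,c_{1}^{2}(Y)-c_{2}(Y)-c_{2}(\mathcal{X})\;\leq\;2\,c_{1}^{2}(Y)-c_{2}(Y)-\tfrac{1}{3}\,c_{1}^{2}(Y)\;=\;\tfrac{5}{3}\,c_{1}^{2}(Y)-c_{2}(Y),$$
which is exactly \eqref{cci}. The constant $3/5$ appears as the natural balance between the factor $2$ coming from adding the two Segre numbers and the factor $3$ from BMY, via $2-1/3=5/3$.

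There is no serious obstacle in this argument: once orbifold BMY is on the table the inequality is a two-line manipulation. The only point requiring care is to be sure that BMY applies in this orbifold setting (Du Val singularities with $K_{\mathcal{X}}$ ample), but this is classical, as discussed in the same paragraph that already underwrites the vanishing theorem \ref{vanish2}. A proof that avoided BMY would be noticeably more delicate, as one would have to exploit Proposition \ref{comp} together with the combinatorics of the ADE configurations to obtain the sharp constant $3/5$ by hand.
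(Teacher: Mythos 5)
Your proof is correct and takes essentially the same route as the paper: both rely on the identity $c_1^2(\mathcal{X})=c_1^2(Y)$ together with the orbifold Bogomolov--Miyaoka--Yau inequality $c_1^2(\mathcal{X})\leq 3c_2(\mathcal{X})$, then substitute into $s_2(Y)+s_2(\mathcal{X})>0$; the only cosmetic difference is that the paper divides through by $c_1^2(Y)$ first, and cites Miyaoka directly for orbifold BMY rather than invoking the K\"ahler--Einstein metric.
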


\begin{proof}
Since $s_{2}(Y)+s_{2}(\mathcal{X})>0$, dividing this inequality by  $c_{1}^{2}(\mathcal{X})=c_{1}^{2}(Y)$, we obtain
$$2-\frac{c_2(Y)}{c_1 ^2(Y)}-\frac{c_2(\mathcal X)}{c_1 ^2(Y)}>0.$$
One of the main result of \cite{Miy} translated into the language of orbifolds is the orbifold Bogomolov-Miyaoka-Yau
inequality for surfaces with quotient singularities:
$$c_{1}^{2}(\mathcal{X})\leq3c_{2}(\mathcal{X}).$$
Applying this inequality, we obtain $$\frac{c_2(\mathcal X)}{c_1 ^2(Y)}>\frac{1}{3}$$ and  $$\frac{c_2(X)}{c_1 ^2(Y)}<\frac{5}{3}.$$

\end{proof}

\begin{remark}
As mentioned above, the existence of a surface of degree $10$ in $\bP^3$ with big cotangent sheaf is obtained in \cite{Thomas}. This shows that the inequality \ref{cci} is not satisfied by all such surfaces.
\end{remark}

\begin{remark}
Recall that for minimal surfaces of general type, we have the Noether inequality : $$c_{1}^{2}(Y)\geq \frac{1}{5}(c_{2}-36).$$ The surfaces that are on the Noether line $c_{1}^{2}=\frac{1}{5}(c_{2}-36)$ are called Horikawa surfaces. As a consequence, it is hopeless to get the proof in that way of the existence of (higher) symmetric forms for Horikawa surfaces with canonical singularities.
\end{remark}

\begin{remark}\label{sumchernratio}
Since we suppose $s_{2}(Y)+s_{2}(\mathcal{X})>0$, an immediate computation gives
$$ \frac{c_2 (Y)+c_2(\mathcal X)}{c_1 ^2(Y)} =  \frac{c_2 (Y)}{c_1 ^2(Y)}+\frac{c_2 (\mathcal{X})}{c_1 ^2 (\mathcal{X})}<2.$$ 
Therefore a ratio $\frac{c_1 ^2 (Y)}{c_2(Y)}$ close to $3/5$ forces the ratio $\frac{c_1 ^2 (\mathcal{X})}{c_2 (\mathcal{X})} $ to be close to the Miyaoka bound $3$.  Examples of orbifolds with $s_2(Y)<0$ and $\frac{c_1 ^2 (\mathcal{X})}{c_2 (\mathcal{X})} $ close to $3$ are rather rare, see \cite{RR} for some references.

\end{remark}

\bigskip

\noindent 
Xavier Roulleau,\\  
{\tt roulleau@math.univ-poitiers.fr}\\
Universit\'e de Poitiers,\\
Laboratoire de Math\'ematiques et Applications\\
Avenue du T\'el\'eport 1,\\
86000 Poitiers\\
France

\bigskip

\noindent 
Erwan Rousseau,\\
{\tt erwan.rousseau@cmi.univ-mrs.fr}\\ 
Laboratoire d'Analyse, Topologie, Probabilit\'es\\ 
Universit\'e d'Aix-Marseille et CNRS\\ 
39, rue Fr\'ed\'eric Joliot-Curie\\ 
13453 Marseille Cedex 13\\ 
France 

\begin{thebibliography}{99}


\bibitem{Bogo77} Bogomolov F., \textit{Families of curves on surfaces of general type}, Dokl. Akad. Nauk.
SSSR 236 (1977), 1041--1044.

\bibitem{Bogo79} Bogomolov F.,  \textit{Holomorphic tensors and vector bundles on projective varieties}, Math. USSR Izvestija 13 (1979), 499--555.

\bibitem{BogO}   Bogomolov F., Oliviera B.,   \textit{Hyperbolicity of nodal hypersurfaces},  J. Reine Angew. Math. 596 (2006), 89--101. 

\bibitem{Brieskorn} Brieskorn, E., \textit{Singular elements of semi-simple algebraic groups}, Actes du Congr\`es International des Math\'ematiciens (Nice, 1970), Tome 2, pp. 279--284. Gauthier-Villars, Paris, 1971.

\bibitem{Chmutov} Chmutov, S. V., \textit{Examples of projective surfaces with many singularities}, J. Algebraic Geom. 1 (1992), no. 2, 191--196.

\bibitem{Debarre} Debarre O., \textit{Hyperbolicity of complex varieties}, Notes of the PRAGMATIC conference, 2004.


\bibitem{FL} Fulton W.; Lazarsfeld R., \textit{Positive polynomials for ample vector bundles}, Math. Ann. 118 (1983), 35--60.

\bibitem{GKK} Greb, D.; Kebekus, S.; Kov\'acs, S., \textit{Extension theorems for differential forms and Bogomolov-Sommese vanishing on log canonical varieties}, Compos. Math. 146 (2010), no. 1, 193--219. 
 
 
\bibitem{Kawa} Kawasaki T., \textit{The Riemann-Roch theorem for complex V-manifolds}, Osaka J. Math., 16(1):151--159, 1979.

\bibitem{Kob85} Kobayashi, R., \textit{Einstein-K\"ahler V-metrics on open Satake V-surfaces with isolated quotient singularities}, Math. Ann. 272 (1985), no. 3, 385--398.

\bibitem{Kob80} Kobayashi, S., \textit{The first Chern class and holomorphic symmetric tensor
fields}, Math. Soc. Japan, 32 (1980), 325--329.

\bibitem{KoMo}  Koll\'ar, J.; Mori, S., \textit{Birational geometry of algebraic varieties}, With the collaboration of C. H. Clemens and A. Corti. Translated from the 1998 Japanese original. Cambridge Tracts in Mathematics, 134. Cambridge University Press, Cambridge, 1998.

\bibitem{Labs} Labs O.,  \textit{Hypersurfaces with Many Singularities -- History, Constructions, Algorithms, Visualization}, PhD Thesis of Johannes Gutenberg-Universit\"at Mainz, Germany. Available on the web. 

\bibitem{McQ0} McQuillan M., \textit{Diophantine approximations and foliations}, Publ. IHES 87 (1998), 121--174.


\bibitem{Miy} Miyaoka Y., \textit{The maximal number of quotient singularities on surfaces with given numerical invariants,} Math. Ann. 268 (1984), 159--171.

\bibitem{Reid} Reid M., \textit{Young person's guide to canonical singularities}, Algebraic geometry, Bowdoin, 1985 (Brunswick, Maine, 1985), 345--414, Proc. Sympos. Pure Math., 46, Part 1, Amer. Math. Soc., Providence, RI, 1987. 

\bibitem{RR} Roulleau X., Rousseau E., \textit{On the hyperbolicity of surfaces of general type with small $K^2$}, to appear in the J. of Lon. Math. Soc.

\bibitem{Sakai} Sakai F., \textit{Symmetric powers of the cotangent bundle and classification of algebraic varieties}, Springer Lect. Notes Math. 732 (1979), 545--563.


\bibitem{Segre} Segre B., \textit{Sul massimo numero di nodi delle superficie di dato ordine}, Bull. U.M.I 2 (1947), 204--212.

\bibitem{Thomas} Thomas J., \textit{Hyperbolicity of nodal surfaces in ${\Bbb P}^3$: effective bounds}, preprint, 2013.

\bibitem{TY} Tian G., Yau S.-T., \textit{Existence of K\"ahler-Einstein metrics on complete K\"ahler manifolds
and their applications to algebraic geometry}, Adv. Ser. Math. Phys. 1 1 (1987), p. 574--628,
Mathematical aspects of string theory (San Diego, Calif., 1986).

\bibitem{Toen} T\"oen B.,  \textit{Th\'eor\`emes de Riemann-Roch pour les champs de Deligne Mumford}, K-Theory, 18(1) (1999) 33--76.

\end{thebibliography}
\end{document}